\chardef\bslash=`\\ 
\def\verbatim{\interlinepenalty\@M \@verbatim
  \leftskip\@totalleftmargin\advance\leftskip2pc
  \frenchspacing\@vobeyspaces \@xverbatim}
\newtheorem{thm}{Theorem}[section]
\newtheorem{cor}[thm]{Corollary}
\newtheorem{lem}[thm]{Lemma}
\begin{document}


\title
{Stationary Sets in Topological and Paratopological Groups}
\author{Raushan ~Z.~Buzyakova}
\address{}
\email{Raushan\_Buzyakova@yahoo.com}
\author{Cetin Vural}
\address{Department of Mathematics, Faculty of Arts and Sciences, Gazi University, 06500 Ankara, Turkey}
\email{cvural@gazi.edu.tr}
\keywords{topological group, paratopological group, paracompact,   ordinal, stationary subset of a regular uncountable cardinal}
\subjclass{54H11, 22A05, 54D20}


\begin{abstract}{
We show that if a topological or paratopological group $G$ contains a stationary subset of
some regular uncountable cardinal, then $G$ contains a subspace which is not collectionwise normal.
This statement implies that if a monotonically normal space (in particular, any generalized ordered space) is a paratopological group then the space is hereditarily paracompact. }
\end{abstract}

\maketitle
\markboth{Raushan Z. Buzyakova and Cetin Vural}{Stationary Sets in Topological and Paratopological Groups}
{ }

\section{Introduction}\label{S:intro}

\par\bigskip
It is proved in \cite{Buz} that if under certain conditions a group $G$ contains a subset homeomorphic to an uncountable ordinal then $G$ is not hereditarily normal. This statement  suggests that a similar statement should be true
if one replaces "an uncountable ordinal" by "a stationary subset of an uncountable regular cardinal". The purpose of this paper
is just this task. The main result of the paper  states that if a topological or paratopological group contains a stationary subset of an uncountable regular cardinal,
then the group is not hereditarily collectionwise normal (or, more precisely, contains a subspace which is either not normal
or not collectionwise Hausdorff).  
Using our main result and  
the  Balogh-Rudin generalization \cite{BalRud}
of the Engelking-Lutzer theorem \cite{Lut}, we conclude that every monotonically
normal topological (or paratopological) group is hereditarily paracompact. This significantly improves an earlier
result in \cite{Vur}, namely, by considerably relaxing the hypothesis
and significantly strengthening the conclusion.

In notation and terminology we will follow \cite{Eng}. All spaces under consideration are Tychonov.
If $X$ is a generalized ordered space and 
$Y$ is its subspace then by $[a,b]_Y$ we denote the trace of the segment $[a,b]\subset X$
in $Y$. The same concerns other types of intervals. If $f$ is a continuous function
from $X$ to $Y$, by $\tilde f$ we denote its continuous extension to the \v Cech-Stone compactifications
$\beta X$ and $\beta Y$. As usual, $G$ is {\it a paratopological group} if $G$ is a group, a topological space, and the group binary operation is continuous with respect to the topology of $G$. Basic facts about ordinals and their stationary subsets
can be found, in particular, in \cite{Kun} or any other introductory text on set theory.
Ordinals and their subsets, when considered as topological spaces, are endowed with the topology of linear order and
the subspace topology, respectively. 
\par\bigskip

\section{Result}\label{S:Result}

\par\bigskip
We start with the statement which is a corollary to the classical argument of Katetov \cite{Kat}
(or see Exercise 2.7.15(a) in \cite{Eng}).
\par\bigskip\noindent
\begin{lem}\label{lem:KatetovProduct} ({\bf Follows from \cite{Kat}})
Suppose that $S$ is a stationary subset of a regular uncountable cardinal $\kappa$, $\lambda<\kappa$,
$\lambda\in L\subset [0,\lambda]$, $\lambda$ is a limit point of $L$, and 
$$
K=\{\langle x,y\rangle\in L\times S : (x<\lambda)\ or \ (x=\lambda\ and \ y\ is\ isolated\ in\ S)\}.
$$
Then $A= \{\langle \lambda, y\rangle\in K\}$ and $B= \{\langle x,y\rangle\in K: y\ is\ limit\ in \ S\}$
are closed and disjoint subsets of $K$ that cannot be separated by disjoint open neighborhoods in $K$. 
\end{lem}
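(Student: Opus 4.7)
The plan is to verify disjointness, closedness, and non-separability in turn. Disjointness of $A$ and $B$ is immediate: the second coordinate of a point in $A$ is isolated in $S$, while that of a point in $B$ is a limit of $S$. The limit points of $S$ in $S$ form a closed subset of $S$ (its complement is the open union of isolated singletons), so if a net in $B$ converges to $\langle x, y\rangle \in K$, then $y$ is a limit of $S$; this forces $x < \lambda$ by the definition of $K$, placing the limit back into $B$. An analogous reading of the definition of $K$ yields closedness of $A$: any net in $A$ has constant first coordinate $\lambda$, so a limit in $K$ has first coordinate $\lambda$ and, by the membership condition, second coordinate isolated in $S$.

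For non-separability I would argue by contradiction. Assume $U, V$ are disjoint open subsets of $K$ with $A \subset U$ and $B \subset V$. For each isolated $y \in S$, since $\{y\}$ is open in $S$ and $\lambda$ admits neighborhoods of the form $(a, \lambda]_L$ (as $\lambda$ is a limit of $L$), pick $a_y \in L$ with $a_y < \lambda$ and $(a_y, \lambda]_L \times \{y\} \subset U$. This produces a map $y \mapsto a_y$ from the isolated points of $S$ into $[0, \lambda)$.

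The key step combines the regularity of $\kappa$ with a Fodor-style argument. Decompose the isolated points of $S$ as $\bigcup_{\beta < \lambda}\{y : a_y \leq \beta\}$. Because $\lambda < \kappa$ and $\kappa$ is regular, in the nontrivial case where the isolated points of $S$ are unbounded in $\kappa$, some set $T := \{y \in S\setminus S' : a_y \leq \beta^*\}$ must be unbounded in $\kappa$ for a suitable $\beta^* < \lambda$ (otherwise a union of fewer than $\kappa$ bounded sets would be bounded). The accumulation points of $T$ then form a closed unbounded subset of $\kappa$, which meets the stationary set $S'$; fix $y^* \in S'$ an accumulation point of $T$, and pick $x^* \in L$ with $\beta^* < x^* < \lambda$, which exists since $\lambda$ is a limit of $L$.

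Finally, $\langle x^*, y^*\rangle \in B \subset V$ admits a basic neighborhood of the form $(O_{x^*} \times (h, y^*]_S) \cap K \subset V$, where $O_{x^*}$ is a neighborhood of $x^*$ in $L$ that we may assume lies in $L \cap [0, \lambda)$, and $h < y^*$. Since $y^*$ accumulates $T$, there exists $y \in T \cap (h, y^*)$. The point $\langle x^*, y\rangle$ lies in $K$ (as $x^* < \lambda$ and $y \in S$), in $V$ (by the chosen basic neighborhood), and in $U$ (since $a_y \leq \beta^* < x^*$ places it in $(a_y, \lambda]_L \times \{y\} \subset U$), contradicting $U \cap V = \emptyset$. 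The principal obstacle I anticipate is the pressing-down step, whose setup depends on the implicit nondegeneracy that the isolated points of $S$ accumulate within $S'$ — the standard setting in which such Katetov-type constructions have content.
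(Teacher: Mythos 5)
Your argument is correct, but it runs in the opposite direction from the paper's. The paper fixes an arbitrary open $U\supseteq B$ and shows directly that the closure of $U$ meets $A$: for each of the fewer-than-$\kappa$ columns $\{x\}\times S$ with $x\in L\setminus\{\lambda\}$, stationarity of the set of points of $S$ that are limit points of $S$ forces a tail $\{x\}\times[y_x,\kappa)_S$ into $U$; regularity of $\kappa$ and $|L|<\kappa$ then give a single $\bar y=\sup_x y_x<\kappa$ with $(L\setminus\{\lambda\})\times[\bar y,\kappa)_S\subseteq U$, and since $\lambda$ is a limit of $L$ the closure of this set hits $A$. You instead shrink a neighborhood of $A$: you attach to each isolated $y$ a left endpoint $a_y<\lambda$, stabilize $a_y\le\beta^*$ on an unbounded set $T$ of isolated points using $\lambda<\kappa=\mathrm{cf}(\kappa)$, and then locate a point of $B$ in the closure of $\bigcup_y(a_y,\lambda]_L\times\{y\}$ via a club-meets-stationary argument. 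Both proofs trade on the same two facts (regularity of $\kappa$ against $|L|<\kappa$, and stationarity of $S$), so yours is a mirror image rather than a new idea; the paper's version is slightly shorter because uniformizing over the $<\kappa$ columns of $L$ is a one-line supremum, whereas your direction requires the extra step of extracting the unbounded fiber $T$.

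The one thing you should not leave as a hedge is the ``nondegeneracy'' you flag at the end: your proof genuinely needs the isolated points of $S$ to be unbounded in $\kappa$ (the paper's proof needs this too, in order to find a point of $A$ above $\bar y$). This is not an implicit extra hypothesis but an automatic consequence of unboundedness of $S$: for any $\mu<\kappa$ the ordinal $\beta=\min\bigl(S\setminus(\mu+1)\bigr)$ satisfies $S\cap\beta\subseteq[0,\mu]$, hence $\sup(S\cap\beta)\le\mu<\beta$ and $\beta$ is isolated in $S$; letting $\mu$ range over $\kappa$ produces unboundedly many isolated points. With that one line added (and the small remark that any point of $S$ that is an accumulation point of $T\subseteq S$ automatically lies in $S'$, so plain stationarity of $S$ suffices where you invoke stationarity of $S'$), your proof is complete.
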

\begin{proof}({\it Natural modification of the argument in \cite{Kat}})
It is clear that $A$ and $B$ are closed and disjoint in $K$.  
Let $U$ be any open set in $K$ that contains $B$. We need to show that $A$ meets the closure of $U$ in $K$.
By stationarity of $\{\alpha\in S: \alpha\ is\ a\ limit\ point\ for\ S\}$, for every $x\in L\setminus \{\lambda\}$ we can find $y_x<\kappa$ such that
$\{x\}\times [y_x, \kappa)_S\subset U$. Since $|L|<\kappa$,
we have $\bar y =\sup \{y_x:x\in L\setminus \{\lambda\}\}<\kappa$. Thus, $(L\setminus \{\lambda\})\times [\bar y,\kappa)_S$
is a subset of $U$, meaning that the closure of $U$ meets $A$.
\end{proof}

\par\bigskip\noindent
\begin{lem}\label{lem:OneToOneOnKatetovProduct}
Let $G$ be a paratopological group with binary operation $\star$ and let $T\subset G$
be a stationary subset of a regular uncountable cardinal $\kappa$. Then 
it is possible to choose a closed unbounded subset $S$ of the set $T$,
an ordinal $\lambda$ in $T$, and a set $L\subset [0,\lambda]_T$ that have the following properties:
\begin{enumerate}
	\item $\lambda\in L$,
	\item $\lambda$ is a limit point of $L$, and
	\item the operation $\star$ is injective on $L\times S$.
\end{enumerate}
\end{lem}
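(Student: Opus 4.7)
The plan is first to reduce $T$ to a cleaner stationary subset, then choose $\lambda$ and $L$ in the obvious way, and finally obtain $S$ via an orbit-minimum argument inside the countable subgroup of $G$ generated by the relevant quotients $l_1^{-1} \star l_2$.

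A standard Fodor-type argument lets us assume every point of $T$ is a limit point of $T$ in $\kappa$: if the complement were stationary, pressing down the ``predecessor'' function $t \mapsto \alpha_t$ (with $(\alpha_t, t]_T = \{t\}$) to a constant $\alpha$ would force two $T$-points into a common interval $(\alpha, t]$, a contradiction. Then fix any $\lambda \in T$, pick $\lambda_n \in T \cap [0,\lambda)$ strictly increasing to $\lambda$, and set $L = \{\lambda\} \cup \{\lambda_n : n < \omega\}$. Properties (1) and (2) are immediate. Write $F = \{l_1^{-1} \star l_2 : l_1 \neq l_2 \in L\}$, a countable subset of $G \setminus \{e\}$, and let $H = \langle F \rangle$, a countable subgroup of $G$. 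Injectivity of $\star$ on $L \times S$ is exactly the statement that no two distinct elements of $S$ lie in the same $H$-orbit.

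For each $h \in H \setminus \{e\}$, Fodor applied to the partial regressive function $t \mapsto h \star t$ on $\{t \in T : h \star t \in T,\ h \star t < t\}$ shows this set is non-stationary (else $h \star t$ would be constant on a stationary subset, forcing $t$ itself constant). Hence $U_h := \{t \in T : h \star t \notin T \text{ or } h \star t > t\}$ is co-non-stationary in $T$, and the countable intersection $U := \bigcap_{h \in H \setminus \{e\}} U_h$ is stationary. Each $u \in U$ is the ordinal minimum of $H u \cap T$, so distinct elements of $U$ lie in distinct $H$-orbits, establishing the injectivity of $\star$ on $L \times U$.

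What remains, and is the main obstacle, is to produce an $S$ that is not merely stationary but \emph{closed} in $T$. The natural strategy is to show $U$ itself is closed in $T$: if $t \in T$ is a limit of a sequence $u_n \in U$ and $h \star t \in T$ for some $h \in H \setminus \{e\}$, then continuity of left translation $L_h$ in the paratopological group $G$ yields $h \star u_n \to h \star t$ in $G$; in the good case $h \star u_n \in T$ cofinally, the inequalities $h \star u_n > u_n$ transfer to the ordinal limit, forcing $h \star t > t$ and hence $t \in U_h$. The delicate residual case is when $h \star u_n \notin T$ cofinally, so the sequence approaches $h \star t \in T$ from \emph{outside} $T$; handling this either requires further thinning $T$ by a club chosen so that such exotic approaches cannot occur, or replacing $U$ by a carefully chosen closed unbounded subset. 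This final closedness step is the technical heart of the argument.
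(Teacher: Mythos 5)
Your first two-thirds is a correct and genuinely different argument from the paper's: the pressing-down computation showing that $\{t\in T: h\star t\in T,\ h\star t<t\}$ is non-stationary for each $h\neq e$, and hence that the countable intersection $U=\bigcap_h U_h$ is stationary with each $u\in U$ the ordinal minimum of $Hu\cap T$, is purely algebraic (it never uses continuity of $\star$) and does yield injectivity of $\star$ on $L\times U$. But the lemma asks for $S$ to be a \emph{closed} unbounded subset of $T$, and you deliver only a stationary set; you say so yourself and correctly identify why your closing-up strategy breaks: if $t\in Cl_T(U)\setminus U$ and $h\star u_n\notin T$ for cofinally many $n$, then $h\star u_n\to h\star t$ only in the topology of $G$, about which nothing is known away from $T$, so no contradiction with $h\star t\in T$, $h\star t<t$ is available. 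This is a genuine gap, not a routine verification left to the reader: adjoining limit points to $U$ can in principle destroy injectivity, and neither of your two suggested repairs is carried out.

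The paper avoids this obstacle entirely by putting all the thinning on the $L$ side and taking $S$ to be a \emph{tail} of $T$, which is trivially closed and unbounded in $T$. Concretely, a closing-off argument using continuity of $\star$ (three increasing sequences in $T$ with a common supremum $\bar\gamma\in T$, plus cancellation, giving $\alpha=\bar\gamma$) shows that for each $\alpha\in T$ there is $\lambda_\alpha\in T$ with $\alpha\star y\neq x\star z$ whenever $x,y,z\in[\lambda_\alpha,\kappa)_T$. One then chooses an unbounded $P\subset T$ with $\lambda_\alpha<\beta$ for $\alpha<\beta$ in $P$, lets $\lambda$ be a limit point of $P$ in $T$, and sets $S=[\lambda,\kappa)_T$ and $L=\{\alpha\in P:\alpha\le\lambda\}\cup\{\lambda\}$. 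The moral is that the quantifier order matters: by making the ``good threshold'' depend on the left factor $\alpha$ rather than trying to purify the right factor, the set $S$ never needs to be thinned at all, so its closedness in $T$ is free. If you want to salvage your approach, the honest options are either to prove the closedness claim (which your own analysis suggests is at least as hard as the original lemma), or to weaken the conclusion to ``$S$ stationary'' and then verify separately that this suffices downstream --- but as a proof of the stated lemma, the argument is incomplete.
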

\begin{proof} 
We will start with the following claim.
\par\bigskip\noindent
{\it Claim.} {\it
For any $\alpha\in T$ there exists $\lambda_\alpha\in T$ such that $\alpha\star y\not = x\star z$
whenever $x,y,z\in [\lambda_\alpha,\kappa)_T$.
}

\par\smallskip\noindent
To prove the claim we assume that no such $\lambda_\alpha$ exists. Then for each $\gamma\in T$
we can find $x_\gamma,y_\gamma,z_\gamma\in T$ with the following properties:
\begin{description}
	\item[{\rm P1}] $\alpha\star y_\gamma = x_\gamma\star z_\gamma$;
	\item[{\rm P2}] $x_\gamma,y_\gamma,z_\gamma\in (\gamma,\kappa)_T$; 
	\item[{\rm P3}] $x_\gamma,y_\gamma,z_\gamma\in (\max\{x_\beta,y_\beta,z_\beta\},\kappa)_T$ if $\gamma>\beta$.
\end{description}
By stationarity of $T$ we can find $\bar \gamma\in T$ a limit point for $T$ such that
\begin{description}
	\item[{\rm P4}] $\bar\gamma = \sup \{x_\gamma:\gamma<\bar\gamma\} 
	=\sup \{y_\gamma:\gamma<\bar\gamma\}=\sup \{z_\gamma:\gamma<\bar\gamma\}$
\end{description}
By continuity of $\star$ and P1, we have $\alpha\star\bar\gamma=\bar\gamma\star\bar\gamma$. This equality 
implies that 
$\alpha=\bar\gamma$, which contradicts P2 and P3 and proves the claim.

\par\bigskip\noindent
By claim we can find an unbounded  subset $P\subset T$ with the following property:
\begin{description}
	\item[(*)] $\lambda_\alpha <\beta$ whenever $\alpha,\beta\in P$ and $\alpha<\beta$.
\end{description}
Now, by stationarity of $T$, we can find a point $\lambda\in T$ which is a limit point of the set $P$.
Put $S = [\lambda,\kappa)_T$ and $L=\{\alpha\in P:\alpha\leq \lambda\}\cup\{\lambda\}$. 
By (*) and the property of $\lambda_\alpha$'s in the claim conclusion, $S$ and $L$ are as desired.
\end{proof}

\par\bigskip\noindent
\begin{lem}\label{onetooneimageofproduct}
Suppose that $S$ is a stationary subset of an uncountable regular cardinal $\kappa$,
$\lambda<\kappa$, $\lambda\in L\subset [0,\lambda]$, $\lambda$  is a limit point for $L$, 
and $f$  is a continuous bijection
of $L\times S$ onto its image $Z$ such that 
$f|_{\{\lambda\}\times S}$ is a homeomorphism.
Then  $Z$ contains a subspace which is not normal or not collectionwise Hausdorff.
\end{lem}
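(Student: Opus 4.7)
The plan is to apply Lemma~\ref{lem:KatetovProduct} to the given data $L$, $S$, $\lambda$, obtaining the set $K\subset L\times S$ together with its closed disjoint subsets
\[
A = \{\langle\lambda,y\rangle\in K\} \quad\text{and}\quad B = \{\langle x,y\rangle\in K : y\text{ is limit in }S\}
\]
which cannot be separated by disjoint open neighborhoods in $K$. The natural candidate for the required subspace of $Z$ is $Y := f(K)$; I would set $A' := f(A)$ and $B' := f(B)$, which are disjoint since $f$ is a bijection, and aim to show that $Y$ is not normal (otherwise the alternative conclusion kicks in).

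The transfer of non-separability from $K$ to $Y$ is the easy half. If disjoint open subsets $V_1, V_2$ of $Y$ separated $A'$ and $B'$, then by continuity and injectivity of $f$ the preimages $f^{-1}(V_1)$ and $f^{-1}(V_2)$ would be disjoint open subsets of $K$ separating $A$ and $B$, contradicting Lemma~\ref{lem:KatetovProduct}. Thus $A'$ and $B'$ cannot be separated in $Y$ by disjoint open neighborhoods, and once we know both are closed in $Y$, this already shows $Y$ itself is not normal.

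The essential step—and the one where the homeomorphism assumption on $f|_{\{\lambda\}\times S}$ is used—is to establish that $A'$ is closed in $Y$ (and similarly for $B'$). The key remark is that $A = K \cap (\{\lambda\}\times S)$, and since $f|_{\{\lambda\}\times S}$ is a homeomorphism onto $M := f(\{\lambda\}\times S)$, every point of $A'$ is isolated in $M$; this lets one produce, for each $z = f(x,y)\in Y$ with $x<\lambda$, a $Z$-neighborhood of $z$ disjoint from $A'$. A dual argument, using that $B$ is closed in $K$ and disjoint from $\{\lambda\}\times S$, handles $B'$. The obstacle I would flag is precisely here: because $f$ is only a continuous bijection (and in particular not assumed open), the topology of $Y$ is generally strictly coarser than the product topology on $K$, and a priori a point of $Y\setminus A'$ could accidentally lie in the $Y$-closure of $A'$. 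In such a case the argument switches to the collectionwise Hausdorff branch of the conclusion: the extra phantom limit points of $A'$ in $Y$ are used to extract a closed discrete subset of $Y$ (or of a slightly modified subspace of $Z$) that cannot be expanded to a pairwise disjoint open family, yielding failure of collectionwise Hausdorffness instead of normality. This case-split between ``$Y$ is not normal'' and ``$Y$ is not collectionwise Hausdorff'' is exactly what the disjunctive conclusion of the lemma accommodates.
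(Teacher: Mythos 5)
There is a genuine gap, and it sits exactly where you flagged it. Your ``easy half'' is fine: pulling back a putative separation of $f(A)$ and $f(B)$ along the continuous injection $f$ does transfer non-separability from $K$ to $f(K)$, and if both images were closed this would give non-normality. But the entire difficulty of the lemma is that $f$ is only a continuous bijection, so neither $f(A)$ nor $f(B)$ need be closed in $f(K)$, and neither need $f(\{\lambda\}\times S)$ be closed in $Z$. Your proposal acknowledges this and then asserts that ``the extra phantom limit points of $A'$ are used to extract a closed discrete subset \dots that cannot be expanded to a pairwise disjoint open family,'' with no mechanism for doing so. That assertion is the theorem, not a proof of it. Likewise the claimed ``dual argument'' for $B'$ does not exist as stated: $B$ being closed in $K$ (product topology) gives no information about $f(B)$ in the coarser topology of $f(K)$, and in fact controlling $\mathrm{Cl}_{f(K)}(f(B))$ away from $f(A)$ is the hardest part of the paper's argument.

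Concretely, the paper spends three claims and a case analysis filling this hole, and none of that machinery is replaceable by the remarks in your sketch. Claim~1 (a complete-accumulation-point argument in $\beta(L\times S)$, using regularity of $\kappa$ and stationarity of $S$) and Claim~2 (a pressing-down/sup argument) show that after discarding an initial segment one may assume $f(\{\lambda\}\times S)$ is closed in $Z$; without this, even the closedness of $f(A)$ in $f(K)$ is unavailable. Claim~3 plus a uniformization over a $\kappa$-sized set ($a_\alpha=a$ for $\alpha\in S'''$) produces the rectangle $[a,\lambda]_L\times\hat S$ on which the horizontal slices have clopen images (property P4), and \emph{that} is what keeps $\mathrm{Cl}_{f(K)}(f(B))$ off $f(A)$ in Case~2. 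The collectionwise-Hausdorff branch (Case~1) is not triggered by generic ``phantom limit points'' of $f(A)$ but by the specific situation where $\mathrm{Cl}_Z(f(P_\alpha))$ meets the tail of the $\lambda$-column unboundedly often; the failure of collectionwise Hausdorffness then comes from a counting argument (a closed discrete set of size $\kappa$ inside a set of density $<\kappa$ cannot be expanded to a disjoint open family), which again needs the closedness of $f(\{\lambda\}\times S)$ established earlier. So your outline identifies the right obstruction and the right dichotomy, but supplies none of the arguments that overcome the obstruction; as written it is not a proof.
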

\begin{proof}
We may assume that $Z$ is normal. 
To simplify our formulas put
$$
V_\lambda = \{\lambda\}\times S\ and\ V_{\lambda}^\alpha = \{\lambda\}\times (\alpha, \kappa)_S.
$$
\par\bigskip\noindent
{\it Claim 1.} {\it
If $\langle x_1,y_1\rangle,\ \langle x_2,y_2\rangle\in L\times S$ and
$f(x_1,y_1),f(x_2,y_2)\in Cl_Z(f(V_\lambda^\alpha))$ for all $\alpha<\kappa$,
then $\langle x_1,y_1\rangle = \langle x_2,y_2\rangle$.
}
\par\smallskip\noindent
To prove the claim, we first observe that any Hausdorff compactification of $V_\lambda$ has only one
point of complete accumulation of $V_\lambda$. This follows from the fact that $\kappa$ is regular and $S$ is stationary in
$\kappa$. Denote this point in $Cl_{\beta (L\times S)} (V_\lambda)$ by $p$. Then by the claim assumption we
have $\tilde f(p)=\tilde f (x_1,y_1)=\tilde f(x_2,y_2)$, which implies that $f(x_1,y_1)=f(x_2,y_2)$.
Since $f$ is injective, we arrive at the conclusion of the claim.
\par\bigskip\noindent
Note that if $f(x,y)$ is not a complete accumulation point of $f(V_\lambda)$ but belongs
to the closure of $f(V_\lambda)$ in $Z$, then $f(x,y)$ belongs to the closure
of $f(\{\lambda\}\times [0,\alpha)_S)$ in $Z$ for some $\alpha<\kappa$.
\par\bigskip\noindent
{\it Claim 2.} {\it
There exists $\alpha<\kappa$ such that $f(V_\lambda^\alpha)$ 
is closed in $Z_\alpha = f(L\times (\alpha, \kappa)_S)$.
}
\par\smallskip\noindent
To prove the claim, we assume the contrary. Then, by Claim 1,  for every $\alpha<\kappa$ we can find a pair
$\langle x_\alpha,y_\alpha\rangle\in [L\times (\alpha,\kappa)_S]\setminus V_\lambda^\alpha$ and 
an element $z_\alpha\in (\alpha,\kappa)$
with the following properties:
\begin{description}
	\item[\rm 1] $f(x_\alpha,y_\alpha)\in Cl_{Z_\alpha}(f(\{\lambda\}\times (\alpha,z_\alpha)_S))$,
	\item[\rm 2] $y_\alpha,z_\alpha >\max\{y_\beta,z_\beta,\beta\}$ for ever $\alpha>\beta$.
\end{description}
Since $cf(\kappa)=\kappa>\lambda$, we can also assume that
\begin{description}
	\item[\rm 3] $x_\alpha=x_\beta=x$ for all $\alpha,\beta<\kappa$.
\end{description}
Also, by our choice:
\begin{description}
	\item[\rm 4] $x\not = \lambda$.
\end{description}
By (2) and stationarity of $S$, we can find an element $y\in S$ that has the following property:
\begin{description}
	\item[\rm 5] $y=\sup\{y_\alpha:\alpha<y\}=\sup\{z_\alpha:\alpha<y\}$.
\end{description}
Applying (1),(3),(5) and continuity of $f$, we conclude that $f(\lambda, y)= f(x,y)$. This equality and property (4) contradict
the fact that $f$ is injective. Claim 2 is proved.
\par\bigskip\noindent
By virtue of Claim 2, we may assume that $f(V_\lambda)$ is closed in $Z$.
\par\bigskip\noindent
{\it Claim 3.} {\it
For any $\alpha<\kappa$ there exist $a_\alpha<\lambda$ and $b_\alpha<\kappa$ such that
the closure of $f([a_\alpha, \lambda]_L\times [b_\alpha, \kappa)_S)$ in $Z$ misses the closure of
$f([a_\alpha,\lambda ]_L\times \{\alpha\})$ in $Z$.
}
\par\smallskip\noindent
Since $f(V_\lambda )$ is closed in $Z$, the set $V_\lambda^\alpha$ is closed in $V_\lambda$, and $f|_{V_\lambda}$ is a homeomorphism, we conclude
the set $f(V_\lambda^{\alpha})$
is closed in $Z$. Since $f$ is one-to-one, 
$f(\lambda,\alpha )\not \in f(V_\lambda^{\alpha})$. By regularity of $Z$,
we can find open 
sets $U$ and $W$ in $Z$ that contain 
$f(V_\lambda^{\alpha})$
and $f(\lambda,\alpha )$, respectively, such that $Cl_Z(U)\cap Cl_Z(W)=\emptyset$. 
Since $S$ is stationary in $\kappa$ and $\lambda<cf(\kappa)$, we can find an element
$b_\alpha<\kappa$ such that $\{\lambda\}\times [b_\alpha, \kappa)_S$ has a rectangular neighborhood
$[a'_\alpha, \lambda]_L\times [b_\alpha,\kappa)_S$ contained in $f^{-1}(U)$.
Since $\lambda$ is a limit point of $L$, we can find $a_\alpha''<\lambda$ such that
$[a_\alpha'',\lambda]\times \{\alpha\}\subset f^{-1}(W)$.
Clearly, $a_\alpha=\max\{a_\alpha',a_\alpha''\}$ and $b_\alpha$ are as desired. The claim is proved.

\par\bigskip\noindent
For the remaining argument put
$$
P_{\alpha}=L\times [0,\alpha]_S {\rm\ and\ as\ above}\ \ V_{\lambda}^\alpha = \{\lambda\}\times (\alpha, \kappa)_S.
$$
The rest of the proof is handling of the following two cases.
\par\bigskip\noindent
\underline {\it Case 1.} The assumption for this case is: {\it
"There exists $\alpha$ such that $Cl_Z(f(P_{\alpha}))$ meets
$f(V_\lambda^{\gamma})$ for every $\gamma>\alpha$."
}
To handle this case, for every 
$\gamma>\alpha$, fix $\langle \lambda, y_\gamma\rangle\in V_\lambda^{\gamma}$
such that $f(\lambda, y_\gamma)$ belongs to $Cl_Z(f(P_{\alpha}))$.
Thus, there exists a $\kappa$-sized $S'\subset S$ which is discrete in itself and
$f(\{\lambda\}\times S')$ is in the closure
of $f(P_{\alpha})$ in $Z$. Since $f(V_\lambda)$ is closed in $Z$ and $f|_{V_\lambda}$ is a homeomorphism, 
we conclude that the set $f(\{\lambda\}\times S')$ is of cardinality $\kappa$
and is closed and discrete in $f(\{\lambda\}\times S')\cup f(P_\alpha)$. Since the closure
of $f(P_{\alpha})$ in $Z$ has density at most $|\lambda\times \alpha |<\kappa$, we conclude
that $Cl_Z(f(P_{\alpha}))$ is not hereditarily collectionwise Hausdorff, which completes Case 1.

\par\bigskip\noindent
\underline {\it Case 2.} The assumption for this case is: {\it "Case 1 does not take place."} 
To handle this case  we will find $K,A$, and $B$ as in Lemma \ref{lem:KatetovProduct}.
Since Case 1 does not take place we can find a closed unbounded $S'$ in $S$ such that
\begin{description}
	\item[{\rm P1}] $Cl_Z(f(P_{\alpha}))\cap f(V_\lambda^{\beta}) =\emptyset$ for any $\alpha,\beta\in S'$, where $\alpha<\beta$
\end{description}
For each $\alpha\in S'$, fix $a_\alpha$ and $b_\alpha$ as in Claim 2. There exists $S''$ closed and unbounded in $S'$ such that 
\begin{description}
	\item[{\rm P2}] $\beta>b_\alpha$ for any $\alpha,\beta\in S''$ with $\alpha<\beta$.
\end{description}
In other words,
\begin{description}
	\item[{\rm P3}] $Cl_Z(f([a_\alpha,\lambda]_L\times \{\alpha\}))$  misses $Cl_Z(f([a_\alpha,\lambda]_L\times [\alpha+1,\kappa)_{S''})$.
\end{description}
By Claim 3 there exist a $\kappa$-sized subset $S'''\subset S''$ and an ordinal $a<\lambda$ such that
$a_\alpha=a$ for every $\alpha\in S'''$. Let $\hat{S} =Cl_S(S''')$.
Put $Y= [a,\lambda]_L\times \hat{S}$. By P1 and P3 we have
\begin{description}
	\item[{\rm P4}] $f(H_\alpha\cap Y)$ is clopen in $f(Y)$ for any $\alpha\in \hat S$.
\end{description}
We are ready to define $K, A$, and $B$ as  in Lemma \ref{lem:KatetovProduct}. Put $K=([a,\lambda)_L\times \hat S)\cup \{\langle\lambda, y\rangle:y\ is\ isolated\ in\ \hat S\}$.
We have  $A= \{\langle\lambda, y\rangle\in K\}$ and $B= \{\langle x,y\rangle\in K: y\ is\ limit\ in \ \hat S\}$
are closed and disjoint subsets of $K$ that cannot be functionally separated 
as  in Lemma \ref{lem:KatetovProduct}. By P4, the set $Cl_{f(K)}(f(B))$ misses $f(A)$. 
Since $f(V_\lambda)$ is closed in $Z$ and $f$ is injective, we conclude that $f(A)$ is closed in $f(K)$.
Since $A$ and $B$ cannot be functionally separated in $K$,
we conclude that  $Cl_{f(K)}(f(B))$ and $f(A)$ cannot be functionally separated in $f(K)$ either.
Since the closures of the images of $A$ and $B$ in $f(K)$ are disjoint we conclude that
$f(K)$ is not normal, which completes the second case and proves our statement.
\end{proof}

\par\bigskip
Now we are ready to derive our main results.
\par\bigskip\noindent
\begin{thm}\label{thm:main}
Let $G$ be a paratopological group. If $G$ contains a stationary subspace of an uncountable
regular cardinal, then the following are true:
\begin{enumerate}
	\item $G$ is not hereditarily normal or not hereditarily collectionwise Hausdorff;
	\item $G$ is not hereditarily collectionwise normal.
\end{enumerate}
\end{thm}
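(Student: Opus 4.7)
The plan is to assemble the three preceding lemmas. Let $T\subseteq G$ be the stationary subspace of an uncountable regular cardinal $\kappa$; I will identify $T$ with its image in $\kappa$ so that elements of $T$ are simultaneously ordinals and elements of $G$. Applying Lemma \ref{lem:OneToOneOnKatetovProduct} to $T$ produces a closed unbounded $S\subseteq T$, an ordinal $\lambda\in T$, and a set $L\subseteq [0,\lambda]_T$ such that $\lambda\in L$, $\lambda$ is a limit point of $L$, and the group operation $\star$ is injective on $L\times S$.

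Next I would define $f\colon L\times S\to G$ by $f(x,y)=x\star y$ and verify that the hypotheses of Lemma \ref{onetooneimageofproduct} apply to $f$. Continuity is immediate from the joint continuity of $\star$ in a paratopological group, and injectivity is exactly the conclusion of Lemma \ref{lem:OneToOneOnKatetovProduct}. The delicate point, and the only one that requires more than bookkeeping, is that $f|_{\{\lambda\}\times S}$ be a homeomorphism onto its image. For this I would observe that even though inversion is not assumed continuous in a paratopological group, left translation by a fixed element is: the map $L_\lambda\colon g\mapsto \lambda\star g$ and its inverse $L_{\lambda^{-1}}\colon g\mapsto \lambda^{-1}\star g$ are both continuous functions of one variable (hold the first coordinate of $\star$ fixed), so $L_\lambda$ is a homeomorphism of $G$ onto itself, hence $f|_{\{\lambda\}\times S}$, being the composition of the coordinate projection with $L_\lambda|_S$, is a homeomorphism.

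With these verifications in hand, Lemma \ref{onetooneimageofproduct} applies to $Z=f(L\times S)\subseteq G$ and yields a subspace of $G$ that is either not normal or not collectionwise Hausdorff; this is conclusion (1). Conclusion (2) is then a formal consequence: a collectionwise normal Tychonov space is both normal and collectionwise Hausdorff, so if $G$ were hereditarily collectionwise normal it would be simultaneously hereditarily normal and hereditarily collectionwise Hausdorff, contradicting (1).

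The only genuine obstacle in carrying this out is the paratopological (as opposed to topological) setting, which is handled by the observation above that left translations remain homeomorphisms; everything else is a direct appeal to the lemmas already proved.
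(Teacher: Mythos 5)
Your proof is correct and follows essentially the same route as the paper: invoke Lemma \ref{lem:OneToOneOnKatetovProduct} to get $S$, $L$, $\lambda$, note that left translation by $\lambda$ is a homeomorphism (which is what makes $f|_{\{\lambda\}\times S}$ a homeomorphism even in the merely paratopological setting), and then apply Lemma \ref{onetooneimageofproduct}; the derivation of (2) from (1) is the same formal observation. Your write-up is in fact slightly more explicit than the paper's about why translations remain homeomorphisms when inversion need not be continuous.
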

\begin{proof}
Clearly (2) follows from (1). Let us show that (1) holds.
Let $S$, $L$, and $\lambda$ be as in the conclusion of Lemma \ref{lem:OneToOneOnKatetovProduct}.
Since multiplication by an element of $G$ is an automorphism, we conclude that
$\star|_{\{\lambda\}\times S}$ is a homeomorphism.
 Therefore, $f=\star$, $S$, $L$, and $\lambda$ satisfy the hypothesis of 
Lemma \ref{onetooneimageofproduct}. 
 The conclusion of
Lemma \ref{onetooneimageofproduct} completes the proof.
\end{proof}

\par\bigskip
For our further discussion we would like to state the version of Jones result \cite{Jon}:
{\it Suppose that $X$ has density $\kappa$ and has a closed discrete subspace of cardinality $\tau$. If $2^\kappa<2^\tau$
then $X$is not normal.}
Observe that in Lemma \ref{onetooneimageofproduct}, the first case that leads to finding a subspace
which is not collectionwise Hausdorff and has a closed discrete subspace of cardinality greater than
the density.
Therefore if one assumes the Generalized Continuum Hypothesis and uses the mentioned Jones Lemma, then
both cases of the proof of  
Lemma \ref{onetooneimageofproduct}  lead to a subspace which is not normal. We can state this observation as follows.
\par\bigskip\noindent
\begin{thm}\label{thm:mainunderGCH} Assume the Generalized Continuum Hypothesis.
Let $G$ be a paratopological group. If $G$ contains a stationary subspace of an uncountable
regular cardinal, then $G$ is not hereditarily normal.
\end{thm}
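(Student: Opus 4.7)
The plan is to run the same reduction used in Theorem~\ref{thm:main}: apply Lemma~\ref{lem:OneToOneOnKatetovProduct} to pick $S$, $L$, $\lambda$, observe that $f=\star$ then satisfies the hypotheses of Lemma~\ref{onetooneimageofproduct} because left translations are homeomorphisms, and enter the two-case analysis inside the proof of that lemma. Case~2 already manufactures a subspace $f(K)$ that is \emph{not normal}, so nothing further is required there. The entire task is therefore to sharpen the Case~1 conclusion from ``not hereditarily collectionwise Hausdorff'' to ``not normal'' by invoking Jones' lemma, which becomes available once GCH is assumed.

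Concretely, Case~1 of the proof of Lemma~\ref{onetooneimageofproduct} produces an ordinal $\alpha<\kappa$ and a $\kappa$-sized subset $S'\subseteq S$, discrete in itself, such that $f(\{\lambda\}\times S')\subseteq Cl_Z(f(P_\alpha))$. I would apply Jones' lemma to the concrete subspace
\[
X = f(\{\lambda\}\times S')\cup f(P_\alpha),
\]
in which $f(P_\alpha)$ is dense (every point of $f(\{\lambda\}\times S')$ lies in $Cl_Z(f(P_\alpha))$ by construction) and $f(\{\lambda\}\times S')$ is already shown to be closed discrete of cardinality~$\kappa$. First I would bound the density of~$X$ by $\mu:=\max\{\lambda,\alpha,\aleph_0\}<\kappa$, using $|P_\alpha|\le|L|\cdot|[0,\alpha]_S|\le\mu$ and regularity of~$\kappa$. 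Under GCH, $2^\mu=\mu^+\le\kappa<\kappa^+=2^\kappa$, so the hypothesis $2^\mu<2^\kappa$ of Jones' lemma is met, yielding that $X$ is not normal. Coupled with Case~2, this produces a non-normal subspace of~$G$ in every scenario, which is exactly the desired conclusion.

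The only delicate point is the density estimate: one must confirm that $f(P_\alpha)$ is genuinely dense in~$X$ and that $|P_\alpha|$ stays strictly below~$\kappa$, both of which follow from $\lambda,\alpha<\kappa=\mathrm{cf}(\kappa)$ together with the Case~1 construction. I do not anticipate any step being a serious obstacle—the cardinal arithmetic under GCH is clean, and the structural properties of~$X$ have already been extracted inside the proof of Lemma~\ref{onetooneimageofproduct}; the present theorem is essentially a packaging of that lemma's output with Jones' lemma.
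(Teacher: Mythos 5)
Your proposal is correct and follows essentially the same route as the paper: the paper's own justification is precisely the observation that Case~1 of the proof of Lemma~\ref{onetooneimageofproduct} yields a subspace with a closed discrete set of size $\kappa$ and density below $\kappa$, to which Jones' lemma applies under GCH, while Case~2 already produces a non-normal subspace. Your cardinal arithmetic $2^{\mu}=\mu^{+}\le\kappa<\kappa^{+}=2^{\kappa}$ and the identification of the witness $f(\{\lambda\}\times S')\cup f(P_\alpha)$ match the paper's intent exactly.
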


\par\bigskip\noindent
It is of course natural to wonder if
 a paratopological group containing a stationary subset of an uncountable
regular cardinal is not hereditarily normal without any set-theoretic assumptions.

\par\bigskip

For our next corollary we need the fact that every monotonically normal space is hereditarily collectionwise
normal \cite[Theorem 3.1]{HLZ} and hereditarily monotonically normal (which follows from the definition).
\par\bigskip\noindent
\begin{thm}\label{thm:maincorollary} 
Let $G$ be a paratopological group. If $G$ is monotonically normal, then $G$ is hereditarily
paracompact.
\end{thm}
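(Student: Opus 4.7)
The plan is to argue by contradiction, leveraging the Balogh--Rudin generalization \cite{BalRud} of the Engelking--Lutzer theorem together with Theorem \ref{thm:main}. Recall that the Balogh--Rudin theorem asserts that a monotonically normal space fails to be paracompact precisely when it contains a closed subspace homeomorphic to a stationary subset of some regular uncountable cardinal. Together with the two facts quoted just before the statement (monotone normality is hereditary and implies hereditary collectionwise normality), these are the only ingredients one needs.

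First I would assume, toward a contradiction, that $G$ is monotonically normal but not hereditarily paracompact. Then there is a subspace $H\subset G$ that is not paracompact. Since monotone normality is a hereditary property, $H$ is itself monotonically normal. Applying the Balogh--Rudin theorem to $H$, I obtain a closed subspace $T\subset H$ homeomorphic to a stationary subset of some regular uncountable cardinal $\kappa$. In particular, $G$ contains (as a subspace, not necessarily closed in $G$) a copy of a stationary set in $\kappa$.

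At this point Theorem \ref{thm:main} applies directly to $G$: it yields a subspace of $G$ which is either not normal or not collectionwise Hausdorff. In either case $G$ fails to be hereditarily collectionwise normal. This contradicts the fact that every monotonically normal space is hereditarily collectionwise normal, finishing the proof.

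The only potential obstacle is a bookkeeping one, namely verifying that the hypothesis of Theorem \ref{thm:main} requires only the existence of a subspace (not necessarily closed) of $G$ that is homeomorphic to a stationary set in an uncountable regular cardinal; an inspection of Lemma \ref{lem:OneToOneOnKatetovProduct} and the statement of Theorem \ref{thm:main} confirms this, as both speak of containing a stationary subset as a subspace. With that in hand, the argument is a three-line combination of Balogh--Rudin, Theorem \ref{thm:main}, and the quoted properties of monotone normality.
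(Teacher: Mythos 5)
Your argument is correct and is essentially identical to the paper's own proof: both assume $G$ is not hereditarily paracompact, pass to a non-paracompact subspace, invoke heredity of monotone normality and the Balogh--Rudin theorem to produce a stationary set inside $G$, and then contradict hereditary collectionwise normality via Theorem \ref{thm:main}(2). No further comment is needed.
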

\begin{proof}
The Balogh-Rudin generalization \cite{BalRud} of the Engelking-Lutzer theorem states that if a monotonically
normal space is not paracompact then it contains a stationary subset of an uncountable
regular cardinal as a closed subspace. 
Now assume $G$ is not hereditarily paracompact.
Since monotone normality property is inherited by all subspaces
it follows from the Balogh-Rudin theorem that $G$ contains a stationary subset of an uncountable regular cardinal.
By (2) of  Theorem \ref{thm:main}, $G$ is not hereditarily collectionwise normal. However monotone normality implies
that every subspace of $G$ is collectionwise normal. This contradiction completes the proof.
\end{proof}

\par\bigskip
Since every subspace of a linearly ordered space is monotonically normal we have the following statement. 

\par\bigskip\noindent
\begin{cor}\label{cor:GO}
Let $G$ be a paratopological group. If $G$ is a subspace of a linearly ordered space then $G$ is hereditarily
paracompact.
\end{cor}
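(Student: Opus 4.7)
The plan is very short because this corollary sits on top of Theorem \ref{thm:maincorollary} and only one additional ingredient is required: the fact that any subspace of a linearly ordered topological space (that is, any generalized ordered space) is monotonically normal. Once that fact is in place, the argument is mechanical.

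Concretely, I would proceed as follows. First, I observe that by hypothesis $G$ embeds as a subspace of some linearly ordered topological space, so $G$ is a generalized ordered space. Second, I invoke the classical theorem of Heath--Lutzer--Zenor which says that every GO-space is monotonically normal. (This is the result the paper already alluded to in the sentence preceding the corollary, so I would simply cite it rather than prove it.) Consequently $G$ is a monotonically normal paratopological group, and Theorem \ref{thm:maincorollary} applies directly to conclude that $G$ is hereditarily paracompact.

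The only conceivable obstacle is the invocation of the fact that subspaces of LOTS are monotonically normal; but this is a standard result and was explicitly flagged in the discussion just before the corollary, so no real work is needed. Thus the proof is nothing more than a citation followed by one application of Theorem \ref{thm:maincorollary}.
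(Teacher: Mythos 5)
Your proposal is correct and follows exactly the route the paper intends: the sentence preceding the corollary supplies the fact that every subspace of a linearly ordered space is monotonically normal (Heath--Lutzer--Zenor), and then Theorem \ref{thm:maincorollary} is applied directly. Nothing is missing.
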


\par\bigskip\noindent
Corollary \ref{cor:GO} significantly improves an earlier result in \cite{Vur} in which the hypothesis has
one more additional condition, namely, that the group binary operation preserves the order, and the conclusion is paracompactness
of the entire space only.

\par\bigskip\noindent
{\bf Acknowledgment.} {\it
The authors would like to thank the referee for many helpful remarks, corrections, and suggestions.
}

\par\bigskip

\end{document}